\numberwithin{equation}{section}
\theoremstyle{plain}
\newtheorem{teo}{Theorem}[section]
\newtheorem*{teo*}{Theorem}
\newtheorem{cor}[teo]{Corollary}
\newtheorem*{cor*}{Corollary}
\newtheorem{lem}[teo]{Lemma}
\newtheorem*{lem*}{Lemma}
\newtheorem{prop}[teo]{Proposition}
\newtheorem*{prop*}{Proposition}
\theoremstyle{remark}
\newtheorem{obs}[teo]{Remark}
\newtheorem{ex}[teo]{Example}
\newcommand{\R}{\ensuremath{{\mathbb{R}}}}
\newcommand{\g}{\ensuremath{\mathtt{g}}}
\begin{document}

\title[Codazzi tensors in reductive homogeneous spaces]{Codazzi tensor fields in reductive homogeneous spaces}

\author[J. Marshall Reber]{James Marshall Reber}
\author[I. Terek]{Ivo Terek}
\address{Department of Mathematics, The Ohio State University, Columbus, OH 43210, USA}
\email{marshallreber.1@osu.edu}
\email{terekcouto.1@osu.edu}

\keywords{Homogeneous spaces $\cdot$ Codazzi tensors $\cdot$ Non-associative algebras}
\subjclass[2010]{53C30}

\begin{abstract}
We extend the results about left-invariant Codazzi tensor fields on Lie groups equipped with left-invariant Riemannian metrics obtained by d'Atri in 1985 to the setting of reductive homogeneous spaces $G/H$, where the curvature of the canonical connection of second kind associated with the fixed reductive decomposition $\mathfrak{g} = \mathfrak{h}\oplus\mathfrak{m}$ enters the picture. In particular, we show that invariant Codazzi tensor fields on a naturally reductive homogeneous space are parallel.
\end{abstract}
\maketitle

\section*{Introduction}

Whenever $M$ is a smooth manifold equipped with a connection $\nabla$, a twice-covariant symmetric tensor field $A$ on $M$ is called a \emph{Codazzi tensor field} if ${\rm d}^\nabla A = 0$, where ${\rm d}^\nabla$ is the exterior derivative operator (defined with the aid of $\nabla$) acting on tensor bundles over $M$, and we regard $A$ as a $T^*M$-valued $1$-form. When $\nabla$ is torsionfree, $A$ is a Codazzi tensor field if and only if
\begin{equation}\label{defn:Codazzi}
  (\nabla_{\boldsymbol{X}}A)(\boldsymbol{Y},\boldsymbol{Z}) = (\nabla_{\boldsymbol{Y}}A)(\boldsymbol{X},\boldsymbol{Z}),\quad\mbox{for all }\boldsymbol{X},\boldsymbol{Y},\boldsymbol{Z}\in \mathfrak{X}(M), \tag{$\dagger$}
\end{equation}which is to say that the covariant differential $\nabla A$, a three-times covariant tensor field on $M$, is totally symmetric.

%When $\nabla$ is the Levi-Civita connection of a pseudo-Riemannian metric on $M$, $A$ is called nontrivial if it is not a function multiple of the metric.

Codazzi tensors are ubiquitous in geometry, with the most prominent examples being the second fundamental form of a non-degenerate hypersurface in a pseudo-Riemannian manifold with constant sectional curvature (due to the Codazzi-Mainardi compatibility equation), and the Ricci or Schouten tensors of a pseudo-Riemannian manifold with harmonic curvature or harmonic Weyl curvature (due to the relations ${\rm div}\,R = {\rm d}^\nabla {\rm Ric}$ and ${\rm div}\,{\rm W} = {\rm d}^\nabla{\rm Sch}$). Whenever a Riemannian manifold $(M,\g)$ has constant sectional curvature $K$, every Codazzi tensor field locally has the form ${\rm Hess}\,f + Kf\g$ for some smooth function $f$, cf. \cite{discussions}.

Both topological and geometric consequences of the existence of a nontrivial Codazzi tensor field on a Riemannian manifold have been studied in \cite{discussions, Derdzinski-Shen}, and the local structure of a Riemannian manifold carrying a Codazzi tensor field satisfying additional multiplicity assumptions on its spectra and eigendistributions is obtained in \cite{Gebarowski}. Many such results are compiled in \cite[\S 16.6--\S 16.22]{Besse}, which then led to further work \cite{Merton, Catino}.

In a different and more specific direction, left-invariant Codazzi tensor fields on Lie groups equipped with left-invariant Riemannian metrics have been discussed in \cite{dAtri}, with the goal of better understanding the harmonic curvature condition in this setting. New results have been recently obtained in \cite{Boucetta}, where it is shown that solvable Lie groups equipped with left-invariant Riemannian metrics having harmonic curvature must necessarily be Ricci-parallel.

In this paper, we extend the results in \cite{dAtri} to the more general class of invariant Codazzi tensor fields on reductive homogeneous spaces equipped with invariant Riemannian metrics. Our approach to achieve this is straightforward: once a reductive decomposition $\mathfrak{g} = \mathfrak{h}\oplus \mathfrak{m}$ for the homogeneous space $G/H$ is fixed, we run the computations done in \cite{dAtri} in the reductive complement $\mathfrak{m}$ (a non-associative algebra) instead of in the Lie algebra $\mathfrak{g}$. However, unlike in some results in \cite{dAtri} which involve positivity and negativity of sectional and scalar curvatures, the curvatures of $(G/H,\langle\cdot,\cdot\rangle)$ are now compared with curvatures of the canonical connection of second kind associated with the decomposition $\mathfrak{g} = \mathfrak{h}\oplus \mathfrak{m}$ --- with its flatness when $\mathfrak{h} = \{0\}$ and $\mathfrak{m} = \mathfrak{g}$ explaining its absence in \cite{dAtri}. Full proofs are included for the sake of completeness.

\section*{Organization of the text}

We work in the smooth category and all manifolds considered are connected.

In Section \ref{sec:preliminaries}, we gather some well-known standard facts regarding reductive homogeneous spaces needed for the rest of the text, the most important ones being Nomizu's Theorem \cite{Nomizu} on invariant connections and Lemma \ref{lem:cov_der_m}. Section \ref{sec:codazzi_comp} generalizes \cite[Proposition 1]{dAtri} to Proposition \ref{prop:comp_condition}: the same compatibility condition \eqref{eqn:compatibility} ensures that a symmetric bilinear form on $\mathfrak{m}$ reconstructed from prescribed eigenspaces gives rise to a Codazzi tensor field on $G/H$.

Section \ref{sec:versus_diff_curvatures} explores the effects of the existence of an invariant Codazzi tensor field on curvature, generalizing \cite[Propositions 3 and 4]{dAtri} and expressing the new conclusions, Propositions \ref{prop:sec_pos_neg} and \ref{prop:ricci_s}, with the aid of the \emph{difference curvature tensor} introduced in \eqref{eqn:difference_curvature}. In particular, we conclude that every invariant Codazzi tensor field on a naturally reductive homogeneous space is parallel.

\medskip

\noindent {\bf Acknowledgements.} We would like to thank Andrzej Derdzinski for bringing \cite{dAtri} to our attention, which ultimately motivated this work. We are also grateful to the anonymous referee, for the comments which allowed us to improve the exposition of the text.

\section{Preliminaries}\label{sec:preliminaries}

The material in this section is standard and it is included for the convenience of the reader. We refer to \cite[Ch. X]{KN2}, \cite[Ch. II]{Helgason}, and \cite[Ch. II--III]{Arvanitoyeorgos} for more details.

Let $G$ be a Lie group and $H$ be a closed Lie subgroup of $G$, so that the quotient space $G/H$ admits a unique smooth structure for which the natural projection $\pi\colon G \to G/H$ is a principal $H$-bundle. The group $G$ acts transitively on $G/H$ via the ``left translations'' $\tau_g\colon G/H \to G/H$ given by $\tau_g(aH)= (ga)H$. Writing $\mathfrak{g}$ and $\mathfrak{h}$ for the Lie algebras of $G$ and $H$, we assume that $G/H$ is \emph{reductive}: there is a \emph{vector space} direct sum decomposition $\mathfrak{g} = \mathfrak{h}\oplus \mathfrak{m}$ such that $\mathfrak{m}$ is \hbox{${\rm Ad}(H)$-invariant}. We write $(\cdot)_{\mathfrak{h}}\colon \mathfrak{g}\to \mathfrak{h}$ and $(\cdot)_{\mathfrak{m}}\colon \mathfrak{g}\to \mathfrak{m}$ for the direct sum projections, and so $(\mathfrak{m},[\cdot,\cdot]_{\mathfrak{m}})$ becomes a non-associative algebra. The derivative ${\rm d}\pi_e$ restricts to an isomorphism $\mathfrak{m}\cong T_{eH}(G/H)$ and, in addition,
\begin{equation}\label{eq:derivative_ad}
  \parbox{.68\textwidth}{for each $h\in H$, the derivative of $\tau_h\colon G/H\to G/H$ at the fixed point $eH$ is nothing more than ${\rm Ad}(h)\colon \mathfrak{m}\to\mathfrak{m}$.}
\end{equation}Our guiding principle is that for any $G$-equivariant smooth fiber bundle \hbox{$E \to G/H$},
\begin{equation}\label{eqn:equivariant_sections}
  \parbox{.57\textwidth}{$G$-equivariant sections of $E$ are in one-to-one correspondence with points of $E_{eH}$ fixed by $H$.}
\end{equation}Indeed, any point $\phi \in E_{eH}$ which is fixed by $H$ defines a $G$-equivariant section $\psi$ of $E$ via $\psi_{gH} = g\cdot \phi$. For example, taking $E$ to be tensor powers of $T^*(G/H)$ gives us that $G$-invariant covariant tensor fields on $G/H$ are in one-to-one correspondence with \hbox{${\rm Ad}(H)$-in\-va\-ri\-ant} covariant tensors on $\mathfrak{m}$, cf. \cite[Proposition 5.1]{Arvanitoyeorgos}, while taking $E$ to be Grassmannian bundles over $G/H$ yields that $G$-invariant distributions on $G/H$ are in one-to-one correspondence with \hbox{${\rm Ad}(H)$-invariant} vector subspaces of $\mathfrak{m}$. In addition, it has been proved in \cite{Tondeur} that
\begin{equation}\label{eqn:Tondeur}
  \parbox{.625\textwidth}{a $G$-invariant distribution $\mathcal{P}$ on $G/H$ is involutive if and only if the subspace $\,\mathcal{P}_{eH}\,$ is closed under $\,[\cdot,\cdot]_{\mathfrak{m}}$.}
\end{equation}
We will also need Nomizu's theorem \cite[Theorem 8.1]{Nomizu}:
\begin{equation}\label{eqn:nomizu}
  \parbox{.78\textwidth}{$G$-invariant affine connections on $G/H$ are in one-to-one correspondence with ${\rm Ad}(H)$-equivariant multiplications $\mathfrak{m}\times \mathfrak{m}\to \mathfrak{m}$.}
\end{equation}Following \cite[Section 5.2]{Elduque}, a $G$-invariant connection $\nabla$ on $G/H$ and an \linebreak[4]\hbox{${\rm Ad}(H)$-equivariant} multiplication $\alpha$ in $\mathfrak{m}$ related via \eqref{eqn:nomizu} determine each other by the relation \begin{equation}\label{eqn:alpha_and_nabla}
  \alpha(X,Y) = (\nabla_{X^{\#}}Y^{\#})|_{eH} + [X,Y]_{\mathfrak{m}},\quad\mbox{ for all }X,Y\in\mathfrak{m}.
\end{equation}Here, we are using that every $X\in\mathfrak{g}$ determines its corresponding action field $X^{\#} \in \mathfrak{X}(G/H)$, with $X^{\#}_{eH} = X_{\mathfrak{m}}$ and whose complete flow is explicitly given by $(t,aH)\mapsto \tau_{\exp(tX)}(aH)$. Note that the right-invariant vector field on $G$ generated by $X$ is $\pi$-related to $X^{\#}$. For future reference, we also observe that this implies that
\begin{equation}\label{eqn:Lie-der_G-inv}
  \parbox{.82\textwidth}{$\mathcal{L}_{X^{\#}}\varTheta = 0\,$ for every $\,X\in\mathfrak{g}\,$ and \hbox{$G$-invariant} tensor field $\,\varTheta\,$ on $G/H$,}
\end{equation}as the flow of $X^{\#}$ leaves $\varTheta$ invariant. The torsion and curvature of $\nabla$ are given in $\mathfrak{m}$ in terms of $\alpha$ by
\begin{equation}\label{eqn:geom_alpha}
  \parbox{.91\textwidth}{
    \begin{enumerate}[i)]
    \item $T(X,Y) = \alpha(X,Y)-\alpha(Y,X) - [X,Y]_{\mathfrak{m}}$,
    \item \scalebox{.99}{$R(X,Y)Z = \alpha(X,\alpha(Y,Z)) - \alpha(Y,\alpha(X,Z)) - \alpha([X,Y]_{\mathfrak{m}},Z) - [[X,Y]_{\mathfrak{h}}, Z],$}
    \end{enumerate}
  }\end{equation}for all $X,Y,Z\in\mathfrak{m}$, cf. \cite[formulas (9.1) and (9.6)]{Nomizu} or \cite[formula (22)]{Elduque}.

\begin{lem}\label{lem:cov_der_m}
For a $G$-invariant connection $\nabla$ and a $G$-invariant \hbox{$k$-times} covariant tensor field $\varTheta$ on $G/H$, corresponding to $\alpha$ and $\theta$ on $\mathfrak{m}$ under \eqref{eqn:nomizu}--\eqref{eqn:alpha_and_nabla} and \eqref{eqn:equivariant_sections}, the covariant differential $\nabla\varTheta$ is also \hbox{$G$-invariant} and corresponds under \eqref{eqn:equivariant_sections} to $\alpha(\cdot,\theta)$ on $\mathfrak{m}$ given by
\begin{equation}\label{eqn:cov_der_corr}
  \alpha(X,\theta)(Y_1,\ldots, Y_k) = -\sum_{i=1}^k \theta(Y_1,\ldots, \alpha(X,Y_i),\ldots, Y_k)
\end{equation}
for all $X,Y_1,\ldots, Y_k\in\mathfrak{m}$.
\end{lem}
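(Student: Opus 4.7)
The plan is to handle the statement in two pieces: first the $G$-invariance of $\nabla\varTheta$, and then the identification of its value at $eH$ with the formula \eqref{eqn:cov_der_corr} via the action-field expansion of the Leibniz rule.

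For the invariance, since $\nabla$ is $G$-invariant each $\tau_g$ is an affine transformation of $(G/H,\nabla)$, so pulling back commutes with covariant differentiation; combined with $\tau_g^{\ast}\varTheta = \varTheta$, this yields $\tau_g^{\ast}(\nabla\varTheta) = \nabla(\tau_g^{\ast}\varTheta) = \nabla\varTheta$. Hence $\nabla\varTheta$ is $G$-invariant, and by \eqref{eqn:equivariant_sections} it is determined by its value at $eH$.

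For the formula, given $X, Y_1, \ldots, Y_k \in \mathfrak{m}$ I would evaluate $(\nabla_{X^{\#}}\varTheta)(Y_1^{\#},\ldots, Y_k^{\#})|_{eH}$ using the action-field extensions, whose values at $eH$ equal $X, Y_1, \ldots, Y_k$. The Leibniz rule for $\nabla$ produces one expansion, while the analogous expansion for $\mathcal{L}_{X^{\#}}$ combined with $\mathcal{L}_{X^{\#}}\varTheta = 0$ from \eqref{eqn:Lie-der_G-inv} eliminates the derivative $X^{\#}[\varTheta(Y_1^{\#},\ldots,Y_k^{\#})]$ in favor of $\sum_i \varTheta(Y_1^{\#},\ldots,[X^{\#},Y_i^{\#}],\ldots,Y_k^{\#})$. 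Subtracting the two expansions gives a sum whose $i$-th term involves $\nabla_{X^{\#}}Y_i^{\#} - [X^{\#}, Y_i^{\#}]$ evaluated at $eH$. By \eqref{eqn:alpha_and_nabla} this equals $\alpha(X,Y_i) - [X,Y_i]_{\mathfrak{m}} - [X^{\#}, Y_i^{\#}]|_{eH}$, and the remaining $[X, Y_i]_{\mathfrak{m}}$ terms cancel once the bracket of action fields is pinned down.

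The main delicate point is precisely this sign: because the paper defines $X^{\#}$ so that it is $\pi$-related to the \emph{right}-invariant vector field generated by $X$, and right-invariant vector fields on $G$ satisfy $[X^{R},Y^{R}] = -[X,Y]^{R}$, the bracket of action fields picks up a minus sign, $[X^{\#},Y_i^{\#}]|_{eH} = -[X,Y_i]_{\mathfrak{m}}$. Feeding this into the cancellation above leaves exactly $-\sum_i \theta(Y_1,\ldots,\alpha(X,Y_i),\ldots,Y_k)$, which is the right-hand side of \eqref{eqn:cov_der_corr}. Everything else is a routine Leibniz manipulation; the only trap is to get this sign convention correct.
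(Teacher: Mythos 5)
Your proof is correct and takes essentially the same route as the paper's: the relation you obtain by subtracting the Leibniz expansions for $\nabla_{X^{\#}}$ and $\mathcal{L}_{X^{\#}}$, namely $(\nabla_{X^{\#}}\varTheta)(\ldots) = -\sum_i\varTheta(\ldots,\nabla_{X^{\#}}Y_i^{\#}-[X^{\#},Y_i^{\#}],\ldots)$, is exactly the identity the paper quotes, and you resolve the sign via $[X^{\#},Y_i^{\#}]_{eH}=-[X,Y_i]_{\mathfrak{m}}$ and \eqref{eqn:alpha_and_nabla} just as the paper does. The only (harmless) differences are that you make the $G$-invariance of $\nabla\varTheta$ explicit, which the paper leaves implicit, and that you carry out the computation for general $k$ rather than reducing to $k=1$.
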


\begin{proof}
  We will establish \eqref{eqn:cov_der_corr} when $k=1$, with the general case being an exercise in notation. The identity $(\nabla_{\boldsymbol{X}}\varTheta)(\boldsymbol{Y}) = (\mathcal{L}_{\boldsymbol{X}}\varTheta)(\boldsymbol{Y}) - \varTheta(\nabla_{\boldsymbol{X}}\boldsymbol{Y} - [\boldsymbol{X},\boldsymbol{Y}])$ evaluated at the vector fields $\boldsymbol{X} = X^{\#}$ and $\boldsymbol{Y} = Y^{\#}$, with $X,Y\in\mathfrak{m}$, reads as $(\nabla_{X^{\#}}\varTheta)(Y^{\#}) = -\varTheta(\nabla_{X^{\#}}Y^{\#} - [X^{\#},Y^{\#}])$ due to \eqref{eqn:Lie-der_G-inv}. As evaluating the relation $[X^{\#},Y^{\#}] = -[X,Y]^{\#}$ at $eH$ yields $[X^{\#},Y^{\#}]_{eH} = -[X,Y]_{\mathfrak{m}}$, \eqref{eqn:cov_der_corr} follows from \eqref{eqn:alpha_and_nabla}.
\end{proof}

% \begin{proof}
%   Relation \eqref{eqn:cov_der_corr} is a consequence of the definition of $(\nabla_{X^{\#}}\varTheta)(Y_1^{\#},\ldots, Y_k^{\#})$ combined with $\alpha(X,\theta) = (\nabla_X\varTheta)_{eH}$, \eqref{eqn:alpha_and_nabla}, and \eqref{eqn:Lie-der_G-inv}. For $k=1$, we start from the identity $(\nabla_X\varTheta)(Y) = (\mathcal{L}_X\varTheta)(Y) - \varTheta(\nabla_XY - [X,Y])$ evaluated at action fields $X^{\#}$ and $Y^{\#}$, which reads $(\nabla_{X^{\#}}\varTheta)(Y^{\#}) = -\varTheta(\nabla_{X^{\#}}Y^{\#} - [X^{\#},Y^{\#}])$ by \eqref{eqn:Lie-der_G-inv}. As evaluating the relation $[X^{\#},Y^{\#}] = -[X,Y]^{\#}$ at $eH$ yields $[X^{\#},Y^{\#}]_{eH} = -[X,Y]_{\mathfrak{m}}$, \eqref{eqn:cov_der_corr} finally follows from \eqref{eqn:alpha_and_nabla}. The general case is now an exercise in notation.
% \end{proof}

Lastly, whenever $G/H$ is equipped with a $G$-invariant pseudo-Riemannian metric $\langle\cdot,\cdot\rangle$, $\alpha$ corresponding to the Levi-Civita connection under \eqref{eqn:nomizu}--\eqref{eqn:alpha_and_nabla} is called the \emph{Levi-Civita product} of $\langle\cdot,\cdot\rangle$. The Koszul formula for $\alpha$ becomes
\begin{equation}\label{eqn:koszul-alpha}
  2\big\langle \alpha(X,Y),Z\big\rangle = \big\langle [X,Y]_{\mathfrak{m}},Z\big\rangle - \big\langle X, [Y,Z]_{\mathfrak{m}}\big\rangle - \big\langle [X,Z]_{\mathfrak{m}},Y\big\rangle,
\end{equation}for all $X,Y,Z\in\mathfrak{m}$, cf. \cite[Exercise 10]{Elduque}.

\section{The Codazzi compatibility condition in $\mathfrak{m}$}\label{sec:codazzi_comp}

In this section, let $G/H$ be a homogeneous space admitting a reductive decomposition $\mathfrak{g} = \mathfrak{h}\oplus\mathfrak{m}$ be equipped with a $G$-invariant Riemannian metric $\langle\cdot,\cdot\rangle$ and its Levi-Civita product $\alpha$. By Lemma \ref{lem:cov_der_m} and \eqref{defn:Codazzi} in the Introduction, a twice-covariant $G$-invariant symmetric tensor field $A$ on $G/H$ is Codazzi if and only if
\begin{equation}\label{eqn:Cod_in_m}
  \alpha(X,A)(Y,Z) = \alpha(Y,A)(X,Z)
\end{equation}
for all $X,Y,Z\in\mathfrak{m}$. As $A$ is symmetric and $\g$ is positive-definite, the spectral theorem allows us to write an orthogonal direct sum decomposition
\begin{equation}\label{eqn:decomp}
\parbox{.795\textwidth}{$\mathfrak{m} = \mathfrak{m}_1\oplus \cdots \oplus \mathfrak{m}_r$, where $r\geq 1$ and each $\mathfrak{m}_i$ is the eigenspace of $A$ associated with the eigenvalue $\lambda_i$, ordered so that $\lambda_1 < \cdots < \lambda_r$.}
\end{equation}We will also write $(\cdot)_i\colon \mathfrak{m} \to \mathfrak{m}_i$ for the corresponding direct sum projections.

A subalgebra of $(\mathfrak{m},[\cdot,\cdot]_{\mathfrak{m}})$ is called \emph{totally geodesic} if it is closed under $\alpha$. By \eqref{eqn:Tondeur} and \eqref{eqn:alpha_and_nabla}, an ${\rm Ad}(H)$-invariant totally geodesic subalgebra of $(\mathfrak{m},[\cdot,\cdot]_{\mathfrak{m}})$ determines a foliation of $G/H$ by totally geodesic submanifolds. The next result generalizes \cite[Proposition 1]{dAtri}.

\begin{prop}\label{prop:comp_condition}
  Whenever $A$ is a $G$-invariant Codazzi tensor field on $G/H$, all the factors in decomposition \eqref{eqn:decomp} are ${\rm Ad}(H)$-invariant totally geodesic subalgebras of $(\mathfrak{m},[\cdot,\cdot]_{\mathfrak{m}})$, and the compatibility condition   \begin{equation}\label{eqn:compatibility}
    (\lambda_i-\lambda_k)^2 \big\langle [X_i,Y_j]_{\mathfrak{m}},Z_k\big\rangle + (\lambda_j-\lambda_i)^2\big\langle [X_i,Z_k]_{\mathfrak{m}},Y_j\big\rangle = 0
  \end{equation}
holds for all $X,Y,Z\in\mathfrak{m}$ and $i,j,k\in \{1,\ldots,r\}$. Conversely, if a direct sum decomposition $\mathfrak{m} =  \mathfrak{m}_1\oplus\cdots\oplus\mathfrak{m}_r$ into mutually orthogonal ${\rm Ad}(H)$-invariant vector subspaces is given and \eqref{eqn:compatibility} holds, any choice of mutually distinct real constants $\lambda_1,\ldots, \lambda_r$ gives rise to a \hbox{$G$-invariant} Codazzi tensor field on $G/H$ via $A = \bigoplus_{i=1}^n\lambda_i \langle\cdot,\cdot\rangle|_{\mathfrak{m}_i\times\mathfrak{m}_i}$. In addition, $\nabla A \neq 0$ if and only if there exists a triple $(i,j,k)$ of mutually distinct indices with $\langle X_i,[Y_j,Z_k]_{\mathfrak{m}}\rangle \neq 0$, in which case $A$ has at least three distinct eigenvalues.
\end{prop}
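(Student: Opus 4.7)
The approach is to rewrite the tensorial Codazzi condition \eqref{eqn:Cod_in_m} as a single algebraic identity on $\mathfrak{m}$ using Lemma \ref{lem:cov_der_m}, and then read off each conclusion by specializing the resulting equation to triples of eigenspaces. Since $\langle\cdot,\cdot\rangle$ is $\mathrm{Ad}(H)$-invariant, the bilinear form $A$ corresponds via the metric to an $\mathrm{Ad}(H)$-equivariant self-adjoint operator on $\mathfrak{m}$, so its eigenspaces $\mathfrak{m}_i$ are automatically $\mathrm{Ad}(H)$-invariant. Combining Lemma \ref{lem:cov_der_m} (with $k=2$) with $\alpha(X,Y)-\alpha(Y,X)=[X,Y]_{\mathfrak{m}}$ (torsionfreeness of $\alpha$), the Codazzi condition becomes
\begin{equation*}
A([X,Y]_{\mathfrak{m}},Z)+A(Y,\alpha(X,Z))-A(X,\alpha(Y,Z))=0 \tag{$\ast$}
\end{equation*}
for all $X,Y,Z\in\mathfrak{m}$; this is the working form.

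Next I specialize $X\in\mathfrak{m}_i$, $Y\in\mathfrak{m}_j$, $Z\in\mathfrak{m}_k$, expand $(\ast)$ via the Koszul formula \eqref{eqn:koszul-alpha}, and obtain a homogeneous linear relation in the three scalars
\begin{equation*}
a=\langle[X,Y]_{\mathfrak{m}},Z\rangle,\qquad b=\langle[Y,Z]_{\mathfrak{m}},X\rangle,\qquad c=\langle[X,Z]_{\mathfrak{m}},Y\rangle,
\end{equation*}
with coefficients linear in $\lambda_i,\lambda_j,\lambda_k$. Putting $i=j$ and $k\neq i$ collapses the equation to $(\lambda_k-\lambda_i)\,a=0$, giving $[\mathfrak{m}_i,\mathfrak{m}_i]_{\mathfrak{m}}\subseteq\mathfrak{m}_i$. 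For $\alpha$-closure of $\mathfrak{m}_i$, I apply $(\ast)$ in the ordering $(Z,X,Y)$ with $X,Y\in\mathfrak{m}_i$ and $Z\in\mathfrak{m}_k$ ($k\neq i$), expand with Koszul using the just-obtained bracket closure to discard the cross-term $\langle[X,Y]_{\mathfrak{m}},Z\rangle$, and compare with the direct Koszul expression for $\langle\alpha(X,Y),Z\rangle$: both produce the same combination of bracket scalars but with coefficients $\lambda_i$ versus $\lambda_k$, forcing $\langle\alpha(X,Y),Z\rangle=0$.

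The compatibility \eqref{eqn:compatibility} enters once $i,j,k$ are pairwise distinct. Total symmetry of $\nabla A$ yields three permuted forms of $(\ast)$ on the same eigenspace triple, producing a homogeneous linear system in $(a,b,c)$; eliminating $b$ between two of them and factoring reduces, after a short computation, to $(\lambda_i-\lambda_k)^2a+(\lambda_j-\lambda_i)^2c=0$, which is \eqref{eqn:compatibility}. For the converse, applying \eqref{eqn:compatibility} to the three orderings $(i,j,k)$, $(j,i,k)$, $(k,i,j)$ of an unordered triple of distinct indices pins down $b$ and $c$ as explicit scalar multiples of $a$, and direct substitution confirms that all three permuted forms of $(\ast)$ are then satisfied. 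When two of the indices coincide, the specialized form of \eqref{eqn:compatibility} degenerates to exactly the relation ($a=0$, $c=0$, or $a+c=0$) that Koszul expansion of $(\ast)$ demands, so $(\ast)$ also holds in those cases.

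For the final part, Lemma \ref{lem:cov_der_m} combined with Koszul gives the compact formula
\begin{equation*}
\alpha(X,A)(Y,Z)=\frac{\lambda_k-\lambda_j}{2}(b+c-a).
\end{equation*}
Whenever two of $i,j,k$ coincide, the established closures together with total symmetry of $\nabla A$ force $b+c-a=0$ (or kill the coefficient). When $i,j,k$ are pairwise distinct, solving one of the Codazzi relations expresses $b+c-a$ as a nonzero scalar multiple of $a$, so $\nabla A\neq 0$ is equivalent to $a\neq 0$ for some pairwise distinct triple; by a cyclic relabeling of the indices this is in turn equivalent to the existence of a pairwise distinct triple with $\langle X_i,[Y_j,Z_k]_{\mathfrak{m}}\rangle\neq 0$. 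The "at least three distinct eigenvalues" claim is then immediate, since fewer than three eigenvalues admit no triple of pairwise distinct indices. The main obstacle I anticipate is cleanly extracting the asymmetric-looking condition \eqref{eqn:compatibility} from the manifestly symmetric linear system of Codazzi relations, and then inverting that extraction in the converse direction.
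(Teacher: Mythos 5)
Your proposal is correct and follows essentially the same route as the paper: both reduce the Codazzi condition, via Lemma \ref{lem:cov_der_m} and the Koszul formula \eqref{eqn:koszul-alpha}, to linear relations among the three scalars $\langle [X_i,Y_j]_{\mathfrak{m}},Z_k\rangle$, $\langle [Y_j,Z_k]_{\mathfrak{m}},X_i\rangle$, $\langle [X_i,Z_k]_{\mathfrak{m}},Y_j\rangle$ and obtain \eqref{eqn:compatibility} by eliminating one of them from the permuted system (your elimination of $b$ is exactly the paper's combination $(\lambda_j-\lambda_i)\eqref{eqn:technical}+(\lambda_i-\lambda_k)\eqref{eqn:technical_perm}$). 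The only differences are organizational --- you establish the subalgebra properties before deriving \eqref{eqn:compatibility} rather than after --- and your closed form $\alpha(X,A)(Y,Z)=\frac{1}{2}(\lambda_k-\lambda_j)(b+c-a)$ checks out and is equivalent to the paper's \eqref{eqn:eigen_alpha}.
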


\begin{proof}
That each $\mathfrak{m}_i$ is ${\rm Ad}(H)$-invariant follows from ${\rm Ad}(H)$-invariance of both $A$ and $\langle\cdot,\cdot\rangle$. Namely, if $X\in\mathfrak{m}_i$, $h\in H$, and $Y\in\mathfrak{m}$, we have
\[ A({\rm Ad}(h)X,Y) = A(X,{\rm Ad}(h^{-1})Y) = \lambda_i\langle X, {\rm Ad}(h^{-1})Y\rangle = \lambda_i\langle {\rm Ad}(h)X,Y\rangle,
\]so that ${\rm Ad}(h)X\in\mathfrak{m}_i$. Next, as \eqref{eqn:koszul-alpha} is manifestly skew-symmetric in the pair $(Y,Z)$, we see that $\alpha(X,\cdot) \in \mathfrak{so}(\mathfrak{m},\langle\cdot,\cdot\rangle)$ for every $X\in\mathfrak{m}$, from which the relation \begin{equation}\label{eqn:pre-cod_eigenv} -\alpha(Z_k,A)(X_i,Y_j) = (\lambda_i-\lambda_j)\big\langle X_i,\alpha(Z_k,Y_j)\big\rangle\end{equation} follows for all $X,Y,Z\in\mathfrak{m}$. The Codazzi condition \eqref{eqn:Cod_in_m} now reads
  \begin{equation}\label{eqn:cod_eigenv}
    (\lambda_i-\lambda_j)\big\langle X_i, \alpha(Z_k,Y_j)\big\rangle = (\lambda_k-\lambda_j)\big\langle Z_k,\alpha(X_i,Y_j)\big\rangle.
  \end{equation}Using \eqref{eqn:koszul-alpha} twice and rearranging terms, \eqref{eqn:cod_eigenv} becomes
  \begin{equation}\label{eqn:technical}
    \begin{split}
      (\lambda_i-\lambda_k)\big\langle &[X_i,Y_j]_{\mathfrak{m}},Z_k\big\rangle +\\
      &+ (\lambda_i-\lambda_k)\big\langle [Z_k,Y_j]_{\mathfrak{m}},X_i\big\rangle + (\lambda_i+\lambda_k-2\lambda_j)\big\langle [X_i,Z_k]_{\mathfrak{m}},Y_j\big\rangle = 0.
    \end{split}
  \end{equation}Permuting elements, we also have
  \begin{equation}\label{eqn:technical_perm}
    \begin{split}
      (\lambda_j-\lambda_i)\big\langle &[Y_j,Z_k]_{\mathfrak{m}},X_i\big\rangle+ \\
      &+ (\lambda_j-\lambda_i)\big\langle [X_i,Z_k]_{\mathfrak{m}},Y_j\big\rangle + (\lambda_j+\lambda_i-2\lambda_k)\big\langle [Y_j,X_i]_{\mathfrak{m}},Z_k\big\rangle = 0,
    \end{split}\end{equation}and so $(\lambda_j-\lambda_i)\eqref{eqn:technical} + (\lambda_i-\lambda_k)\eqref{eqn:technical_perm} = 0$ becomes precisely \eqref{eqn:compatibility}. Making \hbox{$i=j\neq k$} on \eqref{eqn:compatibility} leads to $[X_i,Y_i]_{\mathfrak{m}} \in \mathfrak{m}_k^\perp$ for all $k\neq i$, so that $[X_i,Y_i]_{\mathfrak{m}}\in\mathfrak{m}_i$. Then, making $j=k\neq i$ on \eqref{eqn:compatibility} gives us that $\big\langle [X_i,Y_j]_{\mathfrak{m}},Z_j\big\rangle + \big\langle [X_i,Z_j]_{\mathfrak{m}},Y_j\big\rangle = 0$, which combined with \eqref{eqn:koszul-alpha} implies that each $\mathfrak{m}_i$ is closed under $\alpha$.

  Conversely, to verify that $A = \bigoplus_{i=1}^r\lambda_i\langle\cdot,\cdot\rangle|_{\mathfrak{m}_i\times\mathfrak{m}_i}$ defines a Codazzi tensor field whenever \eqref{eqn:compatibility} holds, it suffices to note that it implies \eqref{eqn:technical} (and hence \eqref{eqn:cod_eigenv}, due to \eqref{eqn:koszul-alpha}). Indeed: \eqref{eqn:compatibility} becomes \eqref{eqn:technical} when $i=k\neq j$ while, if $i\neq j$, adding to \eqref{eqn:compatibility} the expression obtained from it after permuting $(i,j,k)\mapsto (j,k,i)$ yields \eqref{eqn:technical_perm} (and hence \eqref{eqn:technical}).

  Finally, \eqref{eqn:compatibility} also implies
  \begin{equation}\label{eqn:intermediateDA}
    \parbox{.9\textwidth}{
      \begin{enumerate}[i)]
      \item $\displaystyle{\big\langle [X_i,Z_k]_{\mathfrak{m}},Y_j\big\rangle = -\frac{(\lambda_i-\lambda_k)^2}{(\lambda_j-\lambda_i)^2}\langle [X_i,Y_j]_{\mathfrak{m}},Z_k\rangle,}$
      \item $\displaystyle{\big\langle X_i,[Y_j,Z_k]_{\mathfrak{m}}\big\rangle = \frac{(\lambda_j-\lambda_k)^2}{(\lambda_j-\lambda_i)^2}\big\langle [X_i,Y_j]_{\mathfrak{m}},Z_k\big\rangle,}$
      \end{enumerate}
}
  \end{equation}whenever $i\neq j$. Substituting \eqref{eqn:intermediateDA} into \eqref{eqn:koszul-alpha} and simplifying it with the aid of \eqref{eqn:pre-cod_eigenv}, we obtain
  \begin{equation}\label{eqn:eigen_alpha}
    \big\langle \alpha(X_i,Y_j),Z_k\rangle = \frac{\lambda_i-\lambda_k}{\lambda_i-\lambda_j} \big\langle [X_i,Y_j]_{\mathfrak{m}},Z_k\big\rangle,\qquad i\neq j,
  \end{equation}which directly implies the last assertions regarding $\nabla A$.
\end{proof}

\begin{obs}
  The use of the spectral theorem to obtain \eqref{eqn:decomp} relies crucially on positive-definiteness of the Riemannian metric $\langle\cdot,\cdot\rangle$. When $\langle\cdot,\cdot\rangle$ has indefinite metric signature, we have \emph{Milnor's indefinite spectral theorem} \cite[p. 256]{Greub}:
\begin{equation}\label{eqn:Milnor_spectral}
  \parbox{.82\textwidth}{if a self-adjoint endomorphism $T$ of a pseudo-Euclidean space $(V,\langle\cdot,\cdot\rangle)$ with $\dim V \geq 3$ satisfies that $\langle Tv,v\rangle \neq 0$ for every null \hbox{$v\in V\smallsetminus \{0\}$}, then $T$ is diagonalizable in an orthonormal basis of $V$.}
\end{equation}To justify \eqref{eqn:Milnor_spectral}, it suffices to choose $\varPhi = \langle T\cdot,\cdot\rangle$ and $\varPsi = \langle\cdot,\cdot\rangle$ in the notation of \cite[p. 256]{Greub}. With \eqref{eqn:Milnor_spectral} in place, we see that $A$ gives rise to \eqref{eqn:decomp} and satisfies \eqref{eqn:compatibility} even when $\langle\cdot,\cdot\rangle$ has indefinite metric signature, provided that $\dim \mathfrak{m}\geq 3$ and $A(X,X) \neq 0$ whenever $X\in\mathfrak{m}\smallsetminus \{0\}$ is null. On the other hand, that \eqref{eqn:decomp} and \eqref{eqn:compatibility} together give rise to $G$-invariant Codazzi tensor fields on $G/H$ remains true without any additional assumptions.
\end{obs}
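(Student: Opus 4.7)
The remark packages three distinct assertions that I would treat separately: (i) the cited statement \eqref{eqn:Milnor_spectral} of Milnor's indefinite spectral theorem really does follow from the simultaneous diagonalization result in \cite[p.~256]{Greub}; (ii) under the extra hypotheses $\dim\mathfrak{m}\ge 3$ and $A(X,X)\ne 0$ for every null $X\in\mathfrak{m}\smallsetminus\{0\}$, the forward direction of Proposition \ref{prop:comp_condition}---existence of the decomposition \eqref{eqn:decomp} and the compatibility condition \eqref{eqn:compatibility}---still goes through in indefinite signature; and (iii) the converse direction of Proposition \ref{prop:comp_condition} needs no extra hypotheses whatsoever. My plan is to address these in order.

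For (i), the first move is to convert $A$ into an endomorphism. Since $\langle\cdot,\cdot\rangle$ is nondegenerate, there is a unique $T\in\mathrm{End}(\mathfrak{m})$ with $A = \langle T\cdot,\cdot\rangle$, and the symmetry of $A$ combined with symmetry of the metric forces $T$ to be self-adjoint. The hypothesis $A(X,X)\ne 0$ on null vectors translates verbatim into $\langle TX,X\rangle\ne 0$ on null vectors. To invoke \cite[p.~256]{Greub}, I would set $\varPhi = \langle T\cdot,\cdot\rangle$ and $\varPsi = \langle\cdot,\cdot\rangle$: Greub's result then produces a basis simultaneously orthonormal for $\varPsi$ and diagonal for $\varPhi$, which is exactly an orthonormal eigenbasis of $T$. (The dimension hypothesis $\dim V\ge 3$ is precisely the one appearing in Greub's statement, which is why it reappears in \eqref{eqn:Milnor_spectral}.)

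For (ii), once \eqref{eqn:Milnor_spectral} is in hand, applying it to the endomorphism $T$ above gives directly the orthogonal eigenspace decomposition \eqref{eqn:decomp}. I would then walk through the proof of Proposition \ref{prop:comp_condition} line by line and observe that \emph{every} subsequent step uses only nondegeneracy and ${\rm Ad}(H)$-invariance of $\langle\cdot,\cdot\rangle$: the ${\rm Ad}(H)$-invariance of each $\mathfrak{m}_i$; the skew-symmetry $\alpha(X,\cdot)\in\so{\mathfrak{m},\langle\cdot,\cdot\rangle}$ coming from \eqref{eqn:koszul-alpha}; the pointwise identities \eqref{eqn:pre-cod_eigenv}--\eqref{eqn:cod_eigenv}; and the algebraic manipulations leading to \eqref{eqn:technical}, \eqref{eqn:technical_perm}, and finally \eqref{eqn:compatibility}. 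Positivity or definiteness of the metric enters at none of these steps. For (iii), no spectral theorem is required at all: the decomposition and the constants $\lambda_1,\ldots,\lambda_r$ are part of the data, and the verification that $A = \bigoplus_i\lambda_i\langle\cdot,\cdot\rangle|_{\mathfrak{m}_i\times\mathfrak{m}_i}$ is Codazzi just rewinds the chain $\eqref{eqn:compatibility}\Rightarrow\eqref{eqn:technical}\Rightarrow\eqref{eqn:cod_eigenv}\Rightarrow\eqref{eqn:Cod_in_m}$, which is purely algebraic.

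The main obstacle is not a genuine calculation but rather careful bookkeeping in step (ii): I need to audit each appeal to ``$A$ symmetric plus $\g$ positive-definite'' in the original proof and confirm that it is either cosmetic or replaceable by ``$A$ symmetric plus $\langle\cdot,\cdot\rangle$ nondegenerate plus the null-vector condition on $A$''. The translation to Greub's framework in step (i) is the only conceptually non-routine move; once that identification is made, the rest of the argument is essentially formal.
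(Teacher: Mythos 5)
Your proposal is correct and follows the same route as the paper: the identification $\varPhi=\langle T\cdot,\cdot\rangle$, $\varPsi=\langle\cdot,\cdot\rangle$ to deduce \eqref{eqn:Milnor_spectral} from \cite[p.~256]{Greub}, followed by the observation that the proof of Proposition \ref{prop:comp_condition} only ever uses nondegeneracy and ${\rm Ad}(H)$-invariance of the metric, and that the converse direction is purely algebraic. Your explicit audit of where positive-definiteness could enter is exactly the (implicit) content of the remark.
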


As pointed out in \cite{dAtri}, there is a simple interpretation for the compatibility relation \eqref{eqn:compatibility}. For each $k\in \{1,\ldots, r\}$, considering the inner product $\langle\!\langle \cdot,\cdot\rangle\!\rangle_k$ on $\mathfrak{m}$ defined by\footnote{Beware of the typo in \cite[formula (7)]{dAtri}: the formula there has $\langle X,Y\rangle$ instead of $\langle X_j,Y_j\rangle$.}
\[
  \langle\!\langle X,Y\rangle\!\rangle_k = \sum_{j=1}^r (\delta_{jk}+(\lambda_j-\lambda_k)^2)\langle X_j,Y_j\rangle,\qquad X,Y\in\mathfrak{m},
\]it follows that $\langle\!\langle [Z_k,X]_{\mathfrak{m}}, Y\rangle\!\rangle_k + \langle\!\langle X,[Z_k,Y]_{\mathfrak{m}}\rangle\!\rangle_k = 0$ for all $Z \in \mathfrak{m}_k$ and $X,Y\in\mathfrak{m}_k^\perp$. Indeed, it suffices to apply \eqref{eqn:compatibility}, assuming that $X\in\mathfrak{m}_i$ and $Y\in\mathfrak{m}_j$ with $i,j\neq k$. This means that, writing ${\rm ad}_{\mathfrak{m}}(X)(Y) = [X,Y]_{\mathfrak{m}}$ for every $X,Y\in \mathfrak{m}$ and denoting by $\pi_k^\perp$ the projection of $\mathfrak{m}$ onto $\mathfrak{m}_k^\perp$, the composition $(\pi_k^\perp\circ {\rm ad}_{\mathfrak{m}})|_{\mathfrak{m}_k}$ is a representation of $\,\mathfrak{m}_k\,$ on $\,(\mathfrak{m}_k^\perp,\langle\!\langle \cdot,\cdot\rangle\!\rangle_k)\,$ by skew-adjoint operators. Here, the representation is a representation of the vector space $\mathfrak{m}_k$, not of the non-associative algebra $(\mathfrak{m}_k,[\cdot,\cdot]_k)$. As a consequence:
\begin{equation}\label{eqn:pure_img}
  \parbox{.63\textwidth}{for each $Z_k\in\mathfrak{m}_k$, the chararacteristic roots of the operator $\,\pi_k^\perp\circ {\rm ad}_{\mathfrak{m}}(Z_k)|_{\mathfrak{m}_k^\perp}$ are all purely imaginary.}
\end{equation}

Recall that a non-associative algebra $\mathfrak{a}$ is:
\begin{enumerate}[(a)]
\item \emph{nilpotent} \cite[p. 18]{schafer} if there is a positive integer $t$ such that the product of $t$ elements in $\mathfrak{a}$, no matter how associated, equals zero.
\item \emph{split-solvable} (cf. \cite[p. 21]{knapp}) if there is a sequence $\mathfrak{a} = \mathfrak{a}_0 \supseteq \cdots \supseteq \mathfrak{a}_p = 0$ of ideals of $\mathfrak{a}$ with $\dim(\mathfrak{a}_i/\mathfrak{a}_{i+1})=1$ for every $i=0,\ldots,p-1$.
\end{enumerate}

Following \cite{dAtri}, we call a $G$-invariant Codazzi tensor field $A$ on $G/H$ \emph{essential} if $\nabla A\neq 0$ and none of the ei\-gen\-spa\-ces $\mathfrak{m}_i$ is an ideal of $(\mathfrak{m},[\cdot,\cdot]_{\mathfrak{m}})$. Note that $\mathfrak{m}_k$ is an ideal of $(\mathfrak{m},[\cdot,\cdot]_{\mathfrak{m}})$ if and only if $\pi_k^\perp \circ {\rm ad}_{\mathfrak{m}}(Z_k)|_{\mathfrak{m}_k^\perp} = 0$ for every $Z_k\in\mathfrak{m}_k$. Using the above, we obtain:

\begin{prop}
  If $G/H$ has a $G$-invariant essential Codazzi tensor field $A$, then $(\mathfrak{m},[\cdot,\cdot]_{\mathfrak{m}})$ cannot be nilpotent or split-solvable.
\end{prop}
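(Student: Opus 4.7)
The plan is to argue by contradiction. In either scenario, the idea is to play the purely imaginary spectrum guaranteed by \eqref{eqn:pure_img} against structural constraints imposed on the characteristic roots of ${\rm ad}_\mathfrak{m}(Z_k)$ by nilpotency or split-solvability of $(\mathfrak{m},[\cdot,\cdot]_\mathfrak{m})$, for a suitably chosen $Z_k$. Since $A$ is essential, for each $k \in \{1,\ldots,r\}$ the subspace $\mathfrak{m}_k$ fails to be an ideal, so I can pick $Z_k \in \mathfrak{m}_k$ with $D_k := \pi_k^\perp \circ {\rm ad}_\mathfrak{m}(Z_k)|_{\mathfrak{m}_k^\perp}$ nonzero. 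By the discussion preceding the statement, $D_k$ is skew-adjoint with respect to $\langle\!\langle\cdot,\cdot\rangle\!\rangle_k$; as a nonzero normal operator on a real inner product space, it is $\mathbb{C}$-diagonalizable and admits at least one nonzero (and purely imaginary) eigenvalue. Because each $\mathfrak{m}_k$ is a subalgebra of $(\mathfrak{m},[\cdot,\cdot]_\mathfrak{m})$ by Proposition \ref{prop:comp_condition}, the operator ${\rm ad}_\mathfrak{m}(Z_k)$ preserves $\mathfrak{m}_k$; in block form with respect to $\mathfrak{m} = \mathfrak{m}_k \oplus \mathfrak{m}_k^\perp$, $D_k$ sits as the lower-right block, so the spectrum of $D_k$ is contained in that of ${\rm ad}_\mathfrak{m}(Z_k)$.

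With this in hand, the two cases fall quickly. If $\mathfrak{m}$ is nilpotent in the sense of \cite[p.~18]{schafer}, every iterate $({\rm ad}_\mathfrak{m}(Z_k))^s(Y)$ is a particular association of an $(s+1)$-fold product in $\mathfrak{m}$, so ${\rm ad}_\mathfrak{m}(Z_k)$ is itself a nilpotent endomorphism with only zero eigenvalues, contradicting the existence of a nonzero eigenvalue of $D_k$. If $\mathfrak{m}$ is split-solvable, a basis of $\mathfrak{m}$ adapted to the flag of ideals $\mathfrak{m} = \mathfrak{a}_0 \supseteq \cdots \supseteq \mathfrak{a}_p = 0$ renders every ${\rm ad}_\mathfrak{m}(Z)$ upper triangular. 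Hence the eigenvalues of ${\rm ad}_\mathfrak{m}(Z_k)$ are all real, forcing those of $D_k$ to be both real and purely imaginary, and therefore zero; being $\mathbb{C}$-diagonalizable with vanishing spectrum, $D_k$ must then vanish, a contradiction.

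The only genuine subtlety is the absence of Engel/Lie type theorems in the non-associative setting: I sidestep them entirely by observing that the relevant features are built directly into the definitions. Nilpotency is defined as the vanishing of all $t$-fold products (in any association), which immediately gives nilpotency of each left-multiplication ${\rm ad}_\mathfrak{m}(Z)$; and split-solvability is the existence of a complete flag of \emph{ideals}, which immediately triangularizes every left-multiplication in an adapted basis. Thus the main potential obstacle dissolves once the definitions are unpacked, and the argument hinges entirely on the spectral dichotomy between ``purely imaginary'' and ``real or zero''.
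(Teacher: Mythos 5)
Your proof is correct, and its overall strategy --- pitting the purely imaginary spectrum guaranteed by \eqref{eqn:pure_img} against the real or vanishing spectrum forced by the algebraic hypothesis --- is the same as the paper's. The differences are in execution. For the split-solvable case you prove directly, via a basis adapted to the flag of ideals, that every ${\rm ad}_{\mathfrak{m}}(Z)$ triangularizes and hence has real characteristic roots; the paper obtains the same fact by citing the necessity direction of a result of Knapp, so this is only a difference in packaging. The genuine divergence is in the nilpotent case: the paper introduces the Killing form $\beta(X,Y)={\rm tr}({\rm ad}_{\mathfrak{m}}(X)\circ{\rm ad}_{\mathfrak{m}}(Y))$ and the block-trace identity \eqref{eqn:Killing} to deduce ${\rm tr}\left[(\pi_k^\perp\circ{\rm ad}_{\mathfrak{m}}(Z_k)|_{\mathfrak{m}_k^\perp})^2\right]=0$ and only then invokes \eqref{eqn:pure_img}, whereas you observe that nilpotency of the algebra makes ${\rm ad}_{\mathfrak{m}}(Z_k)$ a nilpotent endomorphism outright and transfer its zero spectrum to $\pi_k^\perp\circ{\rm ad}_{\mathfrak{m}}(Z_k)|_{\mathfrak{m}_k^\perp}$ through the block-triangular decomposition coming from the fact that $\mathfrak{m}_k$ is a subalgebra (Proposition \ref{prop:comp_condition}). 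Your route is slightly more economical: it avoids verifying \eqref{eqn:Killing} and handles both cases by the single observation that the spectrum of the lower-right block is contained in that of ${\rm ad}_{\mathfrak{m}}(Z_k)$ --- a containment the paper also uses, tacitly, in its split-solvable case. You are also right to lean on diagonalizability (via skew-adjointness with respect to $\langle\!\langle\cdot,\cdot\rangle\!\rangle_k$) rather than on \eqref{eqn:pure_img} alone, since purely imaginary characteristic roots by themselves would not rule out a nonzero nilpotent block.
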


\begin{proof}

As in the Lie category, one may define a `Killing form' $\beta$ for $(\mathfrak{m},[\cdot,\cdot]_{\mathfrak{m}})$ via $\beta(X,Y) = {\rm tr}({\rm ad}_{\mathfrak{m}}(X)\circ {\rm ad}_{\mathfrak{m}}(Y))$ for all $X,Y\in\mathfrak{m}$. A direct computation shows that, for every $Z_k\in\mathfrak{m}_k$, the relation 
\begin{equation}\label{eqn:Killing}
\beta(Z_k,Z_k) = \beta_k(Z_k,Z_k) + {\rm tr}\left[(\pi_k^\perp\circ {\rm ad}_{\mathfrak{m}}(Z_k)|_{\mathfrak{m}_k^\perp})^2\right]
\end{equation}
holds, where $\beta_k$ stands for the Killing form of $(\mathfrak{m}_k,[\cdot,\cdot]_k)$.

  Let $Z_k\in\mathfrak{m}_k$ be arbitrary, and assume that $(\mathfrak{m},[\cdot,\cdot]_{\mathfrak{m}})$ is nilpotent. It follows that both operators ${\rm ad}_{\mathfrak{m}}(Z_k)$ and ${\rm ad}_{\mathfrak{m}}(Z_k)|_{\mathfrak{m}_k}$ are nilpotent, and so both $\beta(Z_k,Z_k)$ and $\beta_k(Z_k,Z_k)$ vanish. In particular, \eqref{eqn:Killing} leads to ${\rm tr}\left[(\pi_k^\perp\circ {\rm ad}_{\mathfrak{m}}(Z_k)|_{\mathfrak{m}_k^\perp})^2\right]=0$. Together with \eqref{eqn:pure_img}, this implies that $\pi_k^\perp \circ {\rm ad}_{\mathfrak{m}}(Z_k)|_{\mathfrak{m}_k^\perp} = 0$. 

Now, assume instead that $(\mathfrak{m},[\cdot,\cdot]_{\mathfrak{m}})$ is split-solvable. By \cite[Corollary 1.30]{knapp}, whose `necessity' implication does not rely on the Jacobi identity, the characteristic roots of each ${\rm ad}_{\mathfrak{m}}(Z_k)$, for $Z_k \in \mathfrak{m}_k$, are real. Combined with \eqref{eqn:pure_img}, it follows that $\pi_k^\perp \circ {\rm ad}_{\mathfrak{m}}(Z_k)|_{\mathfrak{m}_k^\perp} = 0$ yet again.
\end{proof}

\section{Codazzi tensors versus difference curvatures}\label{sec:versus_diff_curvatures}

In this section, we continue to work with a homogeneous space $G/H$ equipped with a reductive decomposition $\mathfrak{g} = \mathfrak{h}\oplus\mathfrak{m}$, $G$-invariant Riemannian metric $\langle\cdot,\cdot\rangle$, and Levi-Civita product $\alpha$.

We will also need the \emph{canonical connection of second kind} induced by given reductive decomposition, that is, the affine connection $\nabla^0$ on $G/H$ corresponding under \eqref{eqn:nomizu}--\eqref{eqn:alpha_and_nabla} to the zero product in $\mathfrak{m}$. By (\ref{eqn:geom_alpha}-ii), the curvature tensor $R^0$ of $\nabla^0$ is given simply by $R^0(X,Y)Z = -[[X,Y]_{\mathfrak{h}},Z]$, for all $X,Y,Z\in\mathfrak{m}$. It follows from the Jacobi identity
\[  \sum_{{\rm cyc}} [[X,Y]_{\mathfrak{h}},Z] + \sum_{{\rm cyc}} [[X,Y]_{\mathfrak{m}},Z]_{\mathfrak{m}} = 0,\qquad X,Y,Z\in\mathfrak{m},   \]
and ${\rm Ad}(H)$-invariance of $\langle\cdot,\cdot\rangle$ that:
\begin{enumerate}[i)]
\item $(\mathfrak{m},[\cdot,\cdot]_{\mathfrak{m}})$ is a Lie algebra if and only if $R^0$ satisfies the Bianchi identity,
    \item the expression $\,\langle R^0(X,Y)Z,W\rangle\,$ is skew-symmetric in the pair $(Z,W)$.
\end{enumerate}

The \emph{Ricci tensor} ${\rm Ric}^0$ of $\nabla^0$ is defined by ${\rm Ric}^0(Y,Z) = {\rm tr}(X\mapsto R^0(X,Y)Z)$, with no reference to the metric $\langle\cdot,\cdot\rangle$, and it is only guaranteed to be symmetric if $R^0$ satisfies the Bianchi identity. We also consider the \emph{sectional} and \emph{scalar curvature functions} $K^0$ and ${\rm s}^0$ associated with $\nabla^0$ and $\langle\cdot,\cdot\rangle$: for any plane $\Pi\subseteq \mathfrak{m}$ we let $K^0(\Pi) = \langle R^0(X,Y)Y,X\rangle$, where $\{X,Y\}$ is any orthonormal basis for $\Pi$ (with its choice being immaterial due to (ii) above), and ${\rm s}^0 = {\rm tr}_{\langle\cdot,\cdot\rangle}\,{\rm Ric}^0$.

The results in this section are most conveniently stated and proved in terms of
\begin{equation}\label{eqn:difference_curvature}
  \parbox{.82\textwidth}{the \emph{difference curvature tensor} $R^d = R-R^0$ and the corresponding notions of sectional, Ricci, and scalar curvatures: they are respectively defined by $K^d = K-K^0\,$, $\,{\rm Ric}^d = {\rm Ric} - {\rm Ric}^0\,$, and $\,{\rm s}^d = {\rm s} - {\rm s}^0$.}
\end{equation}

As setup for the next result, observe that whenever $A$ is a $G$-invariant Codazzi tensor field on $G/H$ and $\mathfrak{m}$ is decomposed as in \eqref{eqn:decomp}, an equivalent formulation to \eqref{eqn:eigen_alpha} is
  \begin{equation}\label{eqn:eigen_alpha_2}
    \alpha(X_i,Y_j) =\sum_{k=1}^r \frac{\lambda_i-\lambda_k}{\lambda_i-\lambda_j} [X_i,Y_j]_k,\qquad i\neq j.
  \end{equation}
  Applying \eqref{eqn:eigen_alpha_2} to separately compute each term in the curvature relation (\ref{eqn:geom_alpha}-ii) for $(X,Y,Z) = (X_i,Y_j,Y_j)$, with $i\neq j$, we obtain $\big\langle \alpha(X_i,\alpha(Y_j,Y_j)),X_i\big\rangle = 0$ and
\begin{equation}\label{eqn:intermediate_terms_sect}
\scalebox{.97}{$\displaystyle{\big\langle \alpha(Y_j, \alpha(X_i,Y_j)), X_i\big\rangle = \big\langle \alpha([X_i,Y_j]_{\mathfrak{m}},Y_j),X_i\big\rangle  = \sum_{{k=1}\atop {k\neq j}}^r \frac{\lambda_k-\lambda_i}{\lambda_j-\lambda_k} \big\langle [Y_j,[X_i,Y_j]_k]_i,X_i\big\rangle}$}.
  \end{equation}
Choosing $Z=[X_i,Y_j]_k$ and switching the roles of $X$ and $Y$ in \eqref{eqn:compatibility} leads to
\[  -(\lambda_j-\lambda_k)^2\|[X_i,Y_j]_k\|^2 + (\lambda_j-\lambda_i)^2\big\langle [Y_j,[X_i,Y_j]_k]_i,X_i\big\rangle = 0\]which, when combined with \eqref{eqn:intermediate_terms_sect}, implies that
\begin{equation}\label{eqn:sec_d}
  \big\langle R^d(X_i,Y_j)Y_j,X_i\big\rangle = \frac{2}{(\lambda_i-\lambda_j)^2} \sum_{{k=1}\atop {k\neq j}}^r (\lambda_i-\lambda_k)(\lambda_j-\lambda_k) \|[X_i,Y_j]_k\|^2.
\end{equation}

We are ready to generalize \cite[Proposition 3]{dAtri}:

\begin{prop}\label{prop:sec_pos_neg}
If $G/H$ has a $G$-invariant Codazzi tensor field $A$ with $\nabla A \neq 0$, the difference sectional curvature $K^d$ assumes both positive and negative values.
\end{prop}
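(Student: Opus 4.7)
The starting point is the last assertion of Proposition~\ref{prop:comp_condition}: since $\nabla A \neq 0$, there exist mutually distinct indices $i,j,k$ and vectors $X_i \in \mathfrak{m}_i$, $Y_j \in \mathfrak{m}_j$ with $Z_k := [X_i,Y_j]_k \neq 0$; in particular $r \geq 3$. The relations in \eqref{eqn:intermediateDA} show that $\langle [X_i,Y_j]_\mathfrak{m}, Z_k\rangle$, $\langle [X_i,Z_k]_\mathfrak{m}, Y_j\rangle$, and $\langle X_i, [Y_j,Z_k]_\mathfrak{m}\rangle$ are all nonzero scalar multiples of $\|Z_k\|^2$, so the three indices play symmetric roles and I am free to relabel; I will arrange $\lambda_i < \lambda_j < \lambda_k$, and record that $[X_i,Z_k]_j \neq 0$ and $[Y_j,Z_k]_i \neq 0$.

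The entire approach then rests on the sign structure of the coefficient $(\lambda_a-\lambda_e)(\lambda_b-\lambda_e)$ appearing in \eqref{eqn:sec_d}: it is $\leq 0$ precisely when $\lambda_e \in (\lambda_a,\lambda_b)$ and $\geq 0$ otherwise. Two extreme cases yield a definite sign for \emph{every} choice of vectors, independently of any hypothesis:
\begin{itemize}
\item For the pair $(a,b)=(1,r)$, every $\lambda_e$ with $e\neq 1,r$ satisfies $\lambda_1 < \lambda_e < \lambda_r$, so all coefficients are $\leq 0$ and $\langle R^d(X_1,Y_r)Y_r,X_1\rangle \leq 0$. This is strictly negative as soon as $[X_1,Y_r]_e\neq 0$ for some $e\in\{2,\ldots,r-1\}$.
\item For any adjacent pair $(a,b)=(a,a+1)$, no $\lambda_e$ lies strictly between $\lambda_a$ and $\lambda_{a+1}$, so all coefficients are $\geq 0$ and $\langle R^d(X_a,Y_{a+1})Y_{a+1},X_a\rangle \geq 0$. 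This is strictly positive as soon as $[X_a,Y_{a+1}]_e\neq 0$ for some $e\notin\{a,a+1\}$.
\end{itemize}

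The plan is therefore to exhibit (i) vectors in $\mathfrak{m}_1\times\mathfrak{m}_r$ with some nonzero $\mathfrak{m}_e$-component in their bracket ($e$ strictly between), yielding a plane where $K^d<0$; and (ii) vectors in $\mathfrak{m}_a\times\mathfrak{m}_{a+1}$ for some $a$ with some nonzero bracket component in a third eigenspace, yielding a plane where $K^d>0$. The main obstacle is that the nontrivial triple $\{i,j,k\}$ handed to us by Proposition~\ref{prop:comp_condition} need not a priori be of either of these two shapes. The way around this is a contradiction argument: if $K^d\geq 0$ everywhere, then equality must hold in the extreme-pair inequality above, forcing $[\mathfrak{m}_1,\mathfrak{m}_r]_\mathfrak{m} \subseteq \mathfrak{m}_1\oplus\mathfrak{m}_r$, and symmetrically for $K^d\leq 0$ one gets $[\mathfrak{m}_a,\mathfrak{m}_{a+1}]_\mathfrak{m}\subseteq \mathfrak{m}_a\oplus\mathfrak{m}_{a+1}$ for every $a$. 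I then propagate these vanishing constraints through the compatibility identity \eqref{eqn:compatibility}, using its invariance under cyclic permutation of the three indices, to force every triple of distinct indices to be trivial --- contradicting $\nabla A\neq 0$. Carrying out this propagation carefully, taking advantage of the freedom to relabel the given triple via \eqref{eqn:intermediateDA}, is the delicate step.
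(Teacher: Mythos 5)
Your sign analysis of \eqref{eqn:sec_d} is correct, and the two ``extreme'' shapes you single out --- the pair $(1,r)$, where every interior coefficient $(\lambda_1-\lambda_e)(\lambda_r-\lambda_e)$ is negative, and adjacent pairs $(a,a+1)$, where every coefficient is nonnegative --- are exactly the right objects to look at. The gap is in the step you yourself flag as delicate: the claim that the vanishing constraints obtained from the contradiction hypothesis can be propagated ``through the compatibility identity \eqref{eqn:compatibility}'' to kill every triple. The identity \eqref{eqn:compatibility}, together with \eqref{eqn:intermediateDA}, only relates the three quantities $\langle[X_i,Y_j]_{\mathfrak{m}},Z_k\rangle$, $\langle[X_i,Z_k]_{\mathfrak{m}},Y_j\rangle$, $\langle X_i,[Y_j,Z_k]_{\mathfrak{m}}\rangle$ \emph{within a single triple} $\{i,j,k\}$ (they are nonzero multiples of one another); it carries no information across triples. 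So from $K^d\geq 0$ you get $[\mathfrak{m}_1,\mathfrak{m}_r]_{\mathfrak{m}}\subseteq\mathfrak{m}_1\oplus\mathfrak{m}_r$ and hence the triviality of every triple containing both $1$ and $r$ --- and nothing else. Already for $r=4$ the triple $\{1,2,3\}$ is untouched, so no contradiction with $\nabla A\neq 0$ is reached. Symmetrically, the adjacent-pair constraints kill only triples containing two consecutive indices, which misses $\{1,3,5\}$ once $r\geq 5$.

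The argument can be repaired, but not by algebra alone: you must re-invoke the curvature-sign hypothesis on intermediate pairs, e.g.\ by downward induction on $j-i$ for the negative case (once all pairs of larger gap are known to be subalgebras, the terms of \eqref{eqn:sec_d} with $k<i$ or $k>j$ vanish, the remaining ones are $\leq 0$, and $K^d\geq 0$ forces them to vanish too), and upward induction on $j-i$ for the positive case. This is workable but is precisely the content you have deferred. The paper avoids the induction entirely by making a single extremal choice in each direction --- the \emph{smallest} $\rho$ with $\mathfrak{m}_1\oplus\cdots\oplus\mathfrak{m}_\rho$ not a subalgebra, and the pair $(\mu,\nu)$ of \emph{maximal} gap $\nu-\mu$ among non-subalgebra pairs, as in \eqref{eqn:not_subalgebras} --- so that minimality/maximality kills all the wrong-signed terms of \eqref{eqn:sec_d} at once and one plane of each sign is produced directly, with no contradiction argument. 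I would encourage you to either adopt those extremal choices or write out the double induction explicitly; as it stands the proposal does not close.
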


\begin{proof}
  We claim that
  \begin{equation}\label{eqn:not_subalgebras}  \parbox{.62\textwidth}{there is a smallest integer $2\leq \rho \leq r-1$, as well as integers $1 \leq \mu<\nu\leq r$, such that ${\rm (a)}$ $\mathfrak{m}_1\oplus\cdots\oplus \mathfrak{m}_\rho$ and ${\rm (b)}$ $\,\mathfrak{m}_\mu\oplus \mathfrak{m}_\nu\,$ are \emph{not} subalgebras of $\,(\mathfrak{m},[\cdot,\cdot]_{\mathfrak{m}})$.}  \end{equation}
  If either (\ref{eqn:not_subalgebras}-${\rm a}$) or (\ref{eqn:not_subalgebras}-${\rm b}$) fails to hold, then $\langle [X_i,Y_j]_{\mathfrak{m}},Z_k\rangle = 0$ whenever $i,j,k$ are mutually distinct, so that $\nabla A = 0$ by Proposition \ref{prop:comp_condition}. Indeed, if ${\rm (a)}$ fails then $\langle [X_i,Y_j]_{\mathfrak{m}},Z_k\rangle = 0$ whenever $k>\max\{i,j\}$ as $[\mathfrak{m}_i,\mathfrak{m}_j]_{\mathfrak{m}}\subseteq \mathfrak{m}_1\oplus\cdots\oplus\mathfrak{m}_{\max\{i,j\}}$ is orthogonal to $\mathfrak{m}_k$, and we may apply \eqref{eqn:compatibility}. If ${\rm (b)}$ fails instead, then again $[\mathfrak{m}_i,\mathfrak{m}_j]_{\mathfrak{m}} \subseteq \mathfrak{m}_i\oplus \mathfrak{m}_j$ is orthogonal to $\mathfrak{m}_k$ whenever $i,j,k$ are mutually distinct. This proves \eqref{eqn:not_subalgebras}.

 For $\rho$ as in (\ref{eqn:not_subalgebras}-${\rm a}$), minimality of $\rho$ implies that $[\mathfrak{m}_i,\mathfrak{m}_j]_\rho = 0$ whenever $i,j<\rho$, and so $[\mathfrak{m}_i,\mathfrak{m}_\rho]_j=0$ for distinct $i,j < \rho$ by \eqref{eqn:compatibility} with $k=\rho$. Hence, \eqref{eqn:decomp} and \eqref{eqn:sec_d} yield \[  K^d(\Pi) = \frac{2}{(\lambda_i-\lambda_\rho)^2}\sum_{k=\rho+1}^r (\lambda_i-\lambda_k)(\lambda_\rho-\lambda_k)\|[X_i,Y_\rho]_k\|^2 > 0   \]for $\Pi = \R X_i \oplus \R Y_\rho$ with $i<\rho$, $\|X_i\|=\|Y_\rho\|=1$, and $[X_i,Y_\rho]_{\mathfrak{m}}\neq 0$.

Lastly, for $\mu,\nu$ as in (\ref{eqn:not_subalgebras}-${\rm b}$) chosen so that the difference $\nu-\mu$ is maximal, we have that $\mathfrak{m}_i\oplus\mathfrak{m}_j$ is a subalgebra of $(\mathfrak{m},[\cdot,\cdot]_{\mathfrak{m}})$ for $1\leq i \leq \mu < \nu \leq j \leq r$, provided that $i\neq \mu$ or $j\neq \nu$. This implies that $[\mathfrak{m}_k,\mathfrak{m}_\mu]_{\nu} = [\mathfrak{m}_\nu,\mathfrak{m}_k]_{\mu} = 0$ whenever $k<\mu$ or $k > \nu$, and thus $[\mathfrak{m}_\mu,\mathfrak{m}_\nu]_k=0$ by \eqref{eqn:compatibility} with $(\mu,\nu) = (i,j)$. Choosing unit vectors $X_\mu$ and $Y_\nu$ with $[X_\mu,Y_\nu]_\ell\neq 0$, for some $\ell\neq \mu,\nu$, it follows from \eqref{eqn:decomp} and \eqref{eqn:sec_d} that \[  K^d(\Pi) = \frac{2}{(\lambda_\mu-\lambda_\nu)^2}\sum_{k=\mu}^\nu (\lambda_\mu-\lambda_k)(\lambda_\nu-\lambda_k)\|[X_\mu,Y_\nu]_k\|^2 < 0  \]for $\Pi = \R X_\mu\oplus \R Y_\nu$, as required.
\end{proof}

\begin{ex}
Recall that a homogeneous space $G/H$ with a $G$-invariant Riemannian metric $\langle\cdot,\cdot\rangle$ is called \emph{naturally reductive} if it admits a reductive decomposition $\mathfrak{g} = \mathfrak{h}\oplus\mathfrak{m}$ with the additional property that $\langle [X,Y]_{\mathfrak{m}},Z\rangle + \langle Y, [X,Z]_{\mathfrak{m}}\rangle = 0$, for all $X,Y,Z\in\mathfrak{m}$. Rearranging the formula in \cite[Proposition 5.7]{Arvanitoyeorgos} we see that, in this case, $K^d(\Pi) =  \|[X,Y]_{\mathfrak{m}}\|^2/4\geq 0$, where $\{X,Y\}$ is any orthonormal basis for $\Pi$. By Proposition \ref{prop:sec_pos_neg}, every $G$-invariant Codazzi tensor field on such a naturally reductive homogeneous space is necessarily parallel.
\end{ex}

For the next result, which generalizes \cite[Proposition 4]{dAtri}, we let $M_i$ be the leaf passing through $eH$ of the eigendistribution of $A$ associated with $\lambda_i$, so that $T_{eH}M_i = \mathfrak{m}_i$. Each $M_i$ is a totally geodesic submanifold of $G/H$ equipped either with the Levi-Civita connection of $\langle\cdot,\cdot\rangle$ (by Proposition \ref{prop:comp_condition}), or with the canonical connection $\nabla^0$. This allows us to consider the difference Ricci and scalar curvatures ${\rm Ric}^d_i$ and ${\rm s}^d_i$ in \eqref{eqn:difference_curvature} for each $M_i$. More precisely, given $Y_i,Z_i\in \mathfrak{m}_i$, the endomorphism $X\mapsto R^d(X,Y_i)Z_i$ of $\mathfrak{m}$ restricts to an endomorphism of $\mathfrak{m}_i$, whose trace is ${\rm Ric}_i^d(Y_i,Z_i)$. Then, the trace of ${\rm Ric}_i^d$ computed with $\langle\cdot,\cdot\rangle|_{\mathfrak{m}_i\times\mathfrak{m}_i}$ is ${\rm s}^d_i$.

\begin{prop}\label{prop:ricci_s}
  If $G/H$ has a $G$-invariant Codazzi tensor field, then:
  \begin{enumerate}[\normalfont i)]
  \item ${\rm Ric}^d(Y_j,Y_j) \leq {\rm Ric}^d_j(Y_j,Y_j)\,$ for $j\in \{1,r\}$ and all $Y\in\mathfrak{m}$.
  \item ${\rm s}^d_1+\cdots+{\rm s}^d_r = {\rm s}^d$.
  \end{enumerate}
\end{prop}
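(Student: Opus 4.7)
The approach is to trace the sectional-type formula \eqref{eqn:sec_d} against an orthonormal basis of $\mathfrak{m}$ adapted to the decomposition $\mathfrak{m}=\bigoplus_i\mathfrak{m}_i$, namely, the disjoint union of orthonormal bases of the individual eigenspaces. With this choice, the diagonal ($i=j$) contributions to $\mathrm{Ric}^d(Y_j,Y_j)$ assemble into exactly $\mathrm{Ric}^d_j(Y_j,Y_j)$, and the diagonal contributions to $\mathrm{s}^d$ assemble into $\sum_j\mathrm{s}^d_j$. Consequently, both statements reduce to a controlled analysis of the off-diagonal ($i\neq j$) part of the same expansion --- non-positivity of a quadratic form in $Y_j$ for \emph{(i)}, and exact cancellation for \emph{(ii)}.

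The technical input I will use is the following consequence of identity \eqref{eqn:intermediateDA}(ii), obtained by squaring and summing over orthonormal bases: for fixed $Y_j$ and mutually distinct $p,q\neq j$, the non-negative quantities $B(p,q):=\sum_{a}\|[X_p^a,Y_j]_q\|^2$ satisfy
\[ B(q,p)=\frac{(\lambda_j-\lambda_q)^4}{(\lambda_j-\lambda_p)^4}\,B(p,q), \]
and a fully cyclic analogue holds for the doubly-summed quantities $N(i,j,k):=\sum_{a,b,c}\langle [X_i^b,Y_j^a],Z_k^c\rangle^2$ associated with mutually distinct indices $i,j,k$.

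For \emph{(i)}, I would pair each off-diagonal term $(i,k)$ appearing in the expansion of $\mathrm{Ric}^d(Y_j,Y_j)$ via \eqref{eqn:sec_d} with its reverse $(k,i)$, and use the $B$-relation above to convert $B(k,i)$ into a multiple of $B(i,k)$. A short algebraic simplification then collapses the contribution of each unordered pair $\{p,q\}\subseteq\{1,\ldots,r\}\smallsetminus\{j\}$ to
\[ -\frac{2(\lambda_p-\lambda_q)^2(\lambda_j-\lambda_q)}{(\lambda_j-\lambda_p)^3}\,B(p,q). \]
The sign of this term is determined by $(\lambda_j-\lambda_p)(\lambda_j-\lambda_q)$; for $j\in\{1,r\}$ this product is positive for every admissible pair $\{p,q\}$, making each contribution non-positive and hence settling \emph{(i)}.

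For \emph{(ii)}, I would run the analogous computation with two free indices instead of one: the off-diagonal part of $\mathrm{s}^d$ organizes into sums over unordered triples $\{p,q,r\}$ of mutually distinct indices, each contributing six ordered terms. Applying the cyclic $N$-relation to reduce all six to a common representative $N(p,q,r)$ leaves a scalar prefactor which is a sum of three rational functions in $(\lambda_p,\lambda_q,\lambda_r)$. \emph{The main obstacle is verifying that this prefactor vanishes identically}: I would place the three summands over the common denominator $(\lambda_p-\lambda_q)^3$ and exploit the telescoping identity $(\lambda_p-\lambda_q)+(\lambda_q-\lambda_r)=\lambda_p-\lambda_r$ to combine two of the numerator terms into $(\lambda_p-\lambda_r)^2(\lambda_q-\lambda_r)$, which exactly cancels the third. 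This yields $\mathrm{s}^d=\sum_j\mathrm{s}^d_j$.
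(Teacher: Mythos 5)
Your proposal is correct and follows essentially the same route as the paper: both expand ${\rm Ric}^d$ and ${\rm s}^d$ via \eqref{eqn:sec_d} over an adapted orthonormal basis, identify the diagonal part with ${\rm Ric}^d_j$ (resp.\ $\sum_j {\rm s}^d_j$), and control the off-diagonal part using the quadratic consequences of \eqref{eqn:intermediateDA} --- your $B$- and $N$-relations are exactly the two- and three-term instances of the paper's cyclic identity \eqref{eqn:cyclic_identity}, and your collapsed coefficient $-2(\lambda_p-\lambda_q)^2(\lambda_j-\lambda_q)/(\lambda_j-\lambda_p)^3$ with sign governed by $(\lambda_j-\lambda_p)(\lambda_j-\lambda_q)$ matches the paper's \eqref{eqn:Ricci_estimate}. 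No gaps.
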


\begin{proof}
  First, observe that the cyclic identity
  \begin{equation}\label{eqn:cyclic_identity}
  \begin{split}  \frac{(\lambda_i-\lambda_k)(\lambda_j-\lambda_k)}{(\lambda_i-\lambda_j)^2}\langle [X_i,Y_j]_{\mathfrak{m}},Z_k\rangle^2 &+   \frac{(\lambda_j-\lambda_i)(\lambda_k-\lambda_i)}{(\lambda_j-\lambda_k)^2}\langle [Y_j,Z_k]_{\mathfrak{m}},X_i\rangle^2 + \\ &+ \frac{(\lambda_k-\lambda_j)(\lambda_i-\lambda_j)}{(\lambda_k-\lambda_i)^2}\langle [Z_k,X_i]_{\mathfrak{m}},Y_j\rangle^2=0 \end{split}
  \end{equation}
holds for all $X,Y,Z\in\mathfrak{m}$ whenever $i$, $j$ and $k$ are mutually distinct, as a direct consequence of \eqref{eqn:intermediateDA}. Now, writing $d_i = \dim \mathfrak{m}_i$ and letting $\{E_{i,a}\}_{a=1}^{d_i}$ be an orthonormal basis for $\mathfrak{m}_i$, for each $i=1,\ldots,r$, it follows from the definition of ${\rm Ric}_j^d$ and \eqref{eqn:sec_d} that
\begin{equation}\label{eqn:Ricci_first}
  {\rm Ric}^d(Y_j) = {\rm Ric}^d_j(Y_j) + 2\sum_{{i=1}\atop {i\neq j}}^r\sum_{a=1}^{d_i} \sum_{{k=1}\atop {k\neq j}}^r \sum_{b=1}^{d_k} \frac{(\lambda_i-\lambda_k)(\lambda_j-\lambda_k)}{(\lambda_i-\lambda_j)^2}\langle [E_{i,a},Y_j]_{\mathfrak{m}}, E_{k,b}\rangle^2
\end{equation}
for every $Y_j\in\mathfrak{m}_j$. Here, we write ${\rm Ric}^d(Y_j)$ as a shorthand for ${\rm Ric}^d(Y_j,Y_j)$, and similarly for ${\rm Ric}^d_j$. The summand in the right side of \eqref{eqn:Ricci_first} vanishes when $k=i$ and, relabeling dummy indices $(i,a) \leftrightharpoons (k,b)$ in one of the two copies of such summation, we see that \eqref{eqn:cyclic_identity} leads to
\begin{equation}\label{eqn:Ricci_estimate}
  {\rm Ric}^d(Y_j) = {\rm Ric}^d_j(Y_j) -\sum_{{i=1}\atop {i\neq j}}^r\sum_{a=1}^{d_i} \sum_{{k=1}\atop {k\neq j}}^r \sum_{b=1}^{d_k} \frac{(\lambda_k-\lambda_j)(\lambda_i-\lambda_j)}{(\lambda_k-\lambda_i)^2}\langle [E_{k,b},E_{i,a}]_{\mathfrak{m}},Y_j\rangle^2.
\end{equation}Using \eqref{eqn:decomp} and the fact that $(\lambda_k-\lambda_j)(\lambda_i-\lambda_j)$ is a product of positive (or, negative) factors when $j=1$ (or, $j=r$) for all $i$ and $k$, (i) follows. Finally, setting $Y_j = E_{j,c}$ in \eqref{eqn:Ricci_estimate} and summing over $1\leq c\leq d_j$ and $1\leq j\leq r$, we conclude that (ii) holds: the difference ${\rm s}_1^d+\cdots + {\rm s}_r^d - {\rm s}^d$ equals the sum over mutually distinct indices $i,j,k$ of terms appearing in \eqref{eqn:cyclic_identity}, and therefore it must vanish.

\end{proof}

A last consequence of Proposition \ref{prop:ricci_s} is the counterpart to \cite[Proposition 5]{dAtri}:

\begin{cor}
  Suppose that ${\rm Ric}^d$ itself is a Codazzi tensor field on $G/H$, with $\nabla {\rm Ric}^d \neq 0$. If ${\rm s}^d_i \geq 0$ for $1\leq i \leq r-1$, then ${\rm s}^d_r \neq 0$. In particular, not all eigenspaces of ${\rm Ric}^d$ can be Abelian subalgebras of $(\mathfrak{m},[\cdot,\cdot]_{\mathfrak{m}})$.
\end{cor}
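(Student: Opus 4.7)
The plan is to apply Proposition \ref{prop:ricci_s} directly with the Codazzi tensor field taken to be $A = {\rm Ric}^d$ itself. Since $R^d = R - R^0$ is manifestly $G$-invariant, so is ${\rm Ric}^d$, and by hypothesis it is symmetric, so it admits a spectral decomposition as in \eqref{eqn:decomp}. Moreover, $\nabla {\rm Ric}^d \neq 0$ combined with the last sentence of Proposition \ref{prop:comp_condition} forces $r \geq 3$, making the index range $1 \leq i \leq r-1$ nonempty.

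To prove ${\rm s}^d_r \neq 0$, I would argue by contradiction and assume ${\rm s}^d_r = 0$. Summing the inequality in Proposition \ref{prop:ricci_s}(i) with $j = r$ over an orthonormal basis of $\mathfrak{m}_r$ and using that ${\rm Ric}^d$ acts on $\mathfrak{m}_r$ as multiplication by $\lambda_r$ yields $\lambda_r d_r \leq {\rm s}^d_r = 0$. Hence $\lambda_r \leq 0$, and the ordering $\lambda_1 < \cdots < \lambda_r$ then forces $\lambda_i < 0$ for every $i < r$. The trace identity ${\rm s}^d = \sum_i \lambda_i d_i$ therefore gives ${\rm s}^d < 0$, since at least one $\lambda_i d_i$ is strictly negative. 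This contradicts Proposition \ref{prop:ricci_s}(ii), which combined with the hypothesis ${\rm s}^d_i \geq 0$ for $i \leq r-1$ produces ${\rm s}^d \geq 0$.

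For the ``in particular'' statement, I would suppose for contradiction that every eigenspace $\mathfrak{m}_i$ of ${\rm Ric}^d$ is an Abelian subalgebra. For $X_i, Y_i \in \mathfrak{m}_i$ and arbitrary $Z_i \in \mathfrak{m}_i$, all three brackets on the right-hand side of \eqref{eqn:koszul-alpha} vanish; combined with $\alpha(X_i, Y_i) \in \mathfrak{m}_i$ (totally geodesic, by Proposition \ref{prop:comp_condition}), testing against only $\mathfrak{m}_i$ suffices to conclude $\alpha|_{\mathfrak{m}_i \times \mathfrak{m}_i} = 0$. Substituting into (\ref{eqn:geom_alpha}-ii) with all arguments in $\mathfrak{m}_i$ reduces it to $R|_{\mathfrak{m}_i^3} = R^0|_{\mathfrak{m}_i^3}$, so $R^d|_{\mathfrak{m}_i^3} = 0$ and therefore ${\rm s}^d_i = 0$ for every $i$. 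The positivity hypothesis is thereby trivially satisfied, but ${\rm s}^d_r = 0$ contradicts the main conclusion just established.

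No serious obstacle is expected: the argument is essentially bookkeeping around Proposition \ref{prop:ricci_s}(i)--(ii), leveraging the sign restrictions forced by $\lambda_1 < \cdots < \lambda_r$. The only calculation requiring a little care is the vanishing of $\alpha|_{\mathfrak{m}_i \times \mathfrak{m}_i}$ for Abelian $\mathfrak{m}_i$, where one must invoke the totally geodesic property to legitimately restrict the Koszul test vector to $\mathfrak{m}_i$ itself.
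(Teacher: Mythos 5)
Your proposal is correct and follows essentially the same route as the paper: apply Proposition \ref{prop:ricci_s}(i) with $j=r$ and $A={\rm Ric}^d$ to get $d_r\lambda_r\leq {\rm s}^d_r$, derive ${\rm s}^d<0$ from ${\rm s}^d_r=0$ and the ordering of the eigenvalues, contradict ${\rm s}^d\geq 0$ from item (ii), and then handle the Abelian case via $\alpha|_{\mathfrak{m}_i\times\mathfrak{m}_i}=0$ exactly as the paper does (including the correct use of the totally geodesic property to restrict the Koszul test vector to $\mathfrak{m}_i$). No gaps.
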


\begin{proof}
  Item (i) of Proposition \ref{prop:ricci_s} for $A={\rm Ric}^d$ reads ${\rm Ric}^d(Y_r,Y_r) \geq \lambda_r$ for all unit vectors $Y_r\in\mathfrak{m}_r$, so averaging over an orthonormal basis yields ${\rm s}^d_r/d_r \geq \lambda_r$. If it were to be ${\rm s}^d_r=0$, \eqref{eqn:decomp} would imply that $\lambda_1<\cdots<\lambda_r\leq 0$, and hence ${\rm s}^d = d_1\lambda_1+\cdots + d_r\lambda_r<0$. However, it is clear from ${\rm s}^d_i \geq 0$, for $1\leq i \leq r-1$, and item (ii) of Proposition \ref{prop:ricci_s}, that ${\rm s}^d \geq 0$. The last claim now follows as $R^d_i = 0$ (and thus ${\rm s}^d_i=0$) whenever $\mathfrak{m}_i$ is Abelian, as $\alpha|_{\mathfrak{m}_i\times\mathfrak{m}_i}=0$ in view of \eqref{eqn:koszul-alpha} and Proposition \ref{prop:comp_condition}.
\end{proof}

\bibliography{codazzi_refs}{}
\bibliographystyle{plain}

\end{document}